\documentclass[reqno,12pt]{amsart}

\usepackage{amsfonts,color}
\usepackage{amssymb,amsmath}
\usepackage[all]{xy}


 \newtheorem{theorem}{Theorem}
 \newtheorem{corollary}[theorem]{Corollary}
 \newtheorem{lemma}[theorem]{Lemma}
 \newtheorem{proposition}[theorem]{Proposition}
 \theoremstyle{definition}
 
 \theoremstyle{remark}
 \newtheorem{remark}[theorem]{Remark}
 \newtheorem{example}[theorem]{Example}

 \numberwithin{equation}{section}
 \numberwithin{theorem}{section}


\begin{document}

\title[Strong extensions for $q$-summing operators]
{Strong extensions for $q$-summing operators acting in
$p$-convex Banach function spaces for $1 \le p \le q$}

\author[O.\ Delgado]{O.\ Delgado}
\address{Departamento de Matem\'atica Aplicada I, E.\ T.\ S.\ de Ingenier\'ia
de Edificaci\'on, Universidad de Sevilla, Avenida de Reina Mercedes,
4 A,  Sevilla 41012, Spain}
\email{\textcolor[rgb]{0.00,0.00,0.84}{olvido@us.es}}

\author[E.\ A.\ S\'{a}nchez P\'{e}rez]{E.\ A.\ S\'{a}nchez P\'{e}rez}
\address{Instituto Universitario de Matem\'atica Pura y Aplicada,
Universitat Polit\`ecnica de Val\`encia, Camino de Vera s/n, 46022
Valencia, Spain.}
\email{\textcolor[rgb]{0.00,0.00,0.84}{easancpe@mat.upv.es}}

\subjclass[2010]{46E30, 47B38.}

\keywords{Banach function spaces, extension of operators, order
continuity, $p$-convexity, $q$-summing operators.}

\thanks{The first author gratefully acknowledge the support of the Ministerio de Econom\'{\i}a y Competitividad
(project \#MTM2012-36732-C03-03) and the Junta de Andaluc\'{\i}a
(projects FQM-262 and FQM-7276), Spain.}
\thanks{The second author acknowledges with thanks the support of the Ministerio de Econom\'{\i}a y Competitividad
(project \#MTM2012-36740-C02-02), Spain.}

\date{\today}

\maketitle


\begin{abstract}
Let $1\le p\le q<\infty$ and let $X$ be a $p$-convex Banach function
space over a $\sigma$-finite measure $\mu$. We combine the structure of the spaces $L^p(\mu)$ and $L^q(\xi)$  for constructing
the new space  $S_{X_p}^{\,q}(\xi)$, where $\xi$ is a probability Radon
measure on a certain compact set associated to $X$.  We show some of its properties, and the relevant fact that
every $q$-summing operator $T$ defined on $X$ can be continuously (strongly)
extended to $S_{X_p}^{\,q}(\xi)$. This result turns out to be a
mixture of the Pietsch and Maurey-Rosenthal factorization theorems, which provide (strong) factorizations for $q$-summing operators through $L^q$-spaces when  $1 \le q \le p$. Thus, our result  completes the picture, showing what happens in the  complementary case  $1\le p\le q$, opening the door to the study of the multilinear versions of $q$-summing operators also in these cases.
\end{abstract}



\section{Introduction}

Fix $1\le p\le q<\infty$ and let $T\colon X\to E$ be a Banach space
valued linear operator defined on a saturated order semi-continuous
Banach function space $X$ related to a $\sigma$-finite measure
$\mu$. In this paper we prove an extension theorem for $T$ in the
case when $T$ is $q$-summing and $X$ is $p$-convex. In order to do
this, we first define and analyze a new class of Banach function
spaces denoted by $S_{X_p}^{\,q}(\xi)$ which have some good
properties, mainly order continuity and p-convexity. The space
$S_{X_p}^{\,q}(\xi)$ is constructed by using the spaces $L^p(\mu)$
and $L^q(\xi)$, where $\xi$ is a finite positive Radon measure on a
certain compact set associated to $X$.

Corollary \ref{COR: q-summing-extension} states the desired
extension for $T$. Namely, if $T$ is $q$-summing and $X$ is
$p$-convex then $T$ can be strongly extended continuously to a space of the
type $S_{X_p}^{\,q}(\xi)$. Here we use the term ``strongly" for this extension to remark that the map carrying $X$ into $S_{X_p}^{\,q}(\xi)$ is actually injective; as the reader will notice (Proposition \ref{PROP: SXpq(xi)-space}), this is one of the goals of our result. In order to develop our arguments, we introduce
a new geometric tool which we call the family of $p$-strongly
$q$-concave operators. The inclusion of $X$ into
$S_{X_p}^{\,q}(\xi)$ turns out to belong to this family, in
particular, it is $q$-concave.

If $T$ is $q$-summing then it is $p$-strongly $q$-concave
(Proposition \ref{PROP: q-Summing}). Actually, in Theorem \ref{THM:
SXpqExtension} we show that in the case when $X$ is $p$-convex, $T$
can be continuously extended to a space $S_{X_p}^{\,q}(\xi)$ if and
only if $T$ is $p$-strongly $q$-concave. This result can be
understood as an extension of some well-known relevant
factorizations of the operator theory:

\begin{itemize}\setlength{\leftskip}{-3ex}
\item[(I)] Maurey-Rosenthal factorization theorem: If $T$ is
$q$-concave and $X$ is $q$-convex and order continuous, then $T$ can
be extended to a weighted $L^q$-space related to $\mu$, see for
instance \cite[Corollary 5]{defant}. Several generalizations and
applications of the ideas behind this fundamental factorization
theorem have been recently obtained, see
\cite{calabuig-delgado-sanchezperez,
calabuig-rodriguez-sanchezperez,defant-sanchezperez,delgado-sanchezperez,sanchezperez}.

\item[(II)] Pietsch factorization theorem:
If $T$ is $q$-summing then it factors through a closed subspace
of $L^q(\xi)$, where $\xi$ is a probability Radon measure on a
certain compact set associated to $X$, see for instance
\cite[Theorem 2.13]{diestel-jarchow-tonge}.
\end{itemize}

In Theorem \ref{THM: SXpqExtension}, the extreme case $p=q$ gives a
Maurey-Rosenthal type factorization, while the other extreme case
$p=1$ gives a Pietsch type factorization. We must say also that our generalization will allow to face the problem of the factorization of several $p$-summing type of multilinear operators from products of Banach function spaces ---a topic of current interest---, since it allows to understand factorization of $q$-summing operators from $p$-convex function lattices from a unified point of view not depending on the order relation between $p$ and $q$.

As a consequence of Theorem \ref{THM: SXpqExtension}, we also prove
a kind of Kakutani representation theorem (see for instance
\cite[Theorem 1.b.2]{lindenstrauss-tzafriri}) through the spaces
$S_{X_p}^{\,q}(\xi)$ for $p$-convex Banach function spaces which are
$p$-strongly $q$-concave (Corollary \ref{COR: i-isomorphism}).


\section{Preliminaries}

Let $(\Omega,\Sigma,\mu)$ be a $\sigma$-finite measure space and
denote by $L^0(\mu)$ the space of all measurable real functions on
$\Omega$, where functions which are equal $\mu$-a.e.\ are
identified. By a \emph{Banach function space} (briefly B.f.s.) we
mean a Banach space $X\subset L^0(\mu)$ with norm
$\Vert\cdot\Vert_X$, such that if $f\in L^0(\mu)$, $g\in X$ and
$|f|\le|g|$ $\mu$-a.e.\ then $f\in X$ and $\Vert f\Vert_X\le\Vert
g\Vert_X$. In particular, $X$ is a Banach lattice with the
$\mu$-a.e.\ pointwise order, in which the convergence in norm of a
sequence implies the convergence $\mu$-a.e.\ for some subsequence. A
B.f.s.\ $X$ is said to be \emph{saturated} if there exists no
$A\in\Sigma$ with $\mu(A)>0$ such that $f\chi_A=0$ $\mu$-a.e.\ for
all $f\in X$, or equivalently, if X has a \emph{weak unit} (i.e.\
$g\in X$ such that $g>0$ $\mu$-a.e.).

\begin{lemma}\label{LEM: saturatedBfs}
Let $X$ be a saturated B.f.s. For every $f\in L^0(\mu)$, there
exists $(f_n)_{n\ge1}\subset X$ such that $0\le f_n\uparrow |f|$
$\mu$-a.e.
\end{lemma}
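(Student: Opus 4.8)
The plan is to construct the increasing approximating sequence by truncating $|f|$ against multiples of a weak unit. Since $X$ is saturated, by the equivalent formulation of saturation recorded above it admits a weak unit $g\in X$, i.e.\ a function $g\in X$ with $g>0$ $\mu$-a.e. I would then put, for each $n\ge1$,
\[
f_n:=\min\{|f|,\,ng\},
\]
and verify that this sequence has the three required properties.

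First, $f_n\in X$: the function $f_n$ is measurable and satisfies $0\le f_n\le ng$, while $ng\in X$ because $X$ is a vector space; hence the ideal (solidity) property in the definition of a B.f.s.\ gives $f_n\in X$. Second, monotonicity: since $g\ge0$ $\mu$-a.e.\ we have $ng\le(n+1)g$ $\mu$-a.e., and therefore $0\le f_n\le f_{n+1}$ $\mu$-a.e. Third --- the only point requiring a little care --- pointwise convergence $f_n\uparrow|f|$ $\mu$-a.e.: here one uses both that $g>0$ $\mu$-a.e.\ and that $f\in L^0(\mu)$ is real-valued, so that $|f|<\infty$ $\mu$-a.e. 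At a point $\omega$ where $g(\omega)>0$ and $|f(\omega)|<\infty$ one has $ng(\omega)\to\infty$, hence $ng(\omega)\ge|f(\omega)|$ for all large $n$, and consequently $f_n(\omega)=|f(\omega)|$ eventually; combined with the monotonicity this yields $f_n(\omega)\uparrow|f(\omega)|$. Since the set of such $\omega$ is of full measure, $0\le f_n\uparrow|f|$ $\mu$-a.e., as claimed.

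I do not expect a genuine obstacle: the statement reduces to this essentially one-line construction. The only thing to keep in mind is that the truncation levels $ng$ must increase to $+\infty$ almost everywhere, and this is precisely what the existence of a weak unit guarantees --- exactly the property that would fail if $X$ were not saturated.
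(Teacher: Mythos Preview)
Your proof is correct and follows essentially the same approach as the paper: both pick a weak unit $g$ and truncate $|f|$ against multiples of $g$ to obtain an increasing sequence in $X$. Your choice $f_n=\min\{|f|,ng\}$ is in fact a bit cleaner than the paper's $f_n=\dfrac{ng}{1+ng}\,|f|\,\chi_{\{|f|\le n\}}$, but the underlying idea is identical.
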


\begin{proof}
Consider a weak unit $g\in X$ and take $g_n=ng/(1+ng)$. Note that
$0< g_n< ng$ $\mu$-a.e., so $g_n$ is a weak unit in $X$. Moreover,
$(g_n)_{n\ge1}$ increases $\mu$-a.e.\ to the constant function equal
to $1$. Now, take $f_n=g_n|f|\chi_{\{\omega\in\Omega:\,|f|\le n\}}$.
Since $0\le f_n\le ng_n$ $\mu$-a.e., we have that $f_n\in X$, and
$f_n\uparrow |f|$ $\mu$-a.e.
\end{proof}

The \emph{K\"{o}the dual} of a B.f.s.\ $X$ is the space $X'$ given
by the functions $h\in L^0(\mu)$ such that $\int|hf|\,d\mu<\infty$
for all $f\in X$. If $X$ is saturated then $X'$ is a saturated
B.f.s.\ with norm $\Vert h\Vert_{X'}=\sup_{f\in B_X}\int|hf|\,d\mu$
for $h\in X'$. Here, as usual, $B_X$ denotes the closed unit ball of
$X$. Each function $h\in X'$ defines a functional $\zeta(h)$ on $X$
by $\langle\zeta(h),f\rangle=\int hf\,d\mu$ for all $f\in X$. In
fact, $X'$ is isometrically order isomorphic (via $\zeta$) to a
closed subspace of the topological dual $X^*$ of $X$.

From now and on, a B.f.s.\ $X$ will be assumed to be saturated. If
for every $f,f_n\in X$ such that $0\le f_n\uparrow f$ $\mu$-a.e.\ it
follows that $\Vert f_n\Vert_X\uparrow \Vert f\Vert_X$, then $X$ is
said to be \emph{order semi-continuous}. This is equivalent to
$\zeta(X')$ being a \emph{norming subspace} of $X^*$, i.e.\ $\Vert
f\Vert_X=\sup_{h\in B_{X'}}\int |fh|\,d\mu$ for all $f\in X$. A
B.f.s.\ $X$ is \emph{order continuous} if for every $f,f_n\in X$
such that $0\le f_n\uparrow f$ $\mu$-a.e., it follows that $f_n\to
f$ in norm. In this case, $X'$ can be identified with $X^*$.

For general issues related to B.f.s.'\ see
\cite{lindenstrauss-tzafriri}, \cite{okada-ricker-sanchezperez} and
\cite[Ch.\,15]{zaanen} considering the function norm $\rho$ defined
as $\rho(f)=\Vert f\Vert_X$ if $f\in X$ and $\rho(f)=\infty$ in
other case.

Let $1\le p<\infty$. A B.f.s.\ $X$ is said to be  \emph{$p$-convex}
if there exists a constant $C>0$ such that
$$
\Big\Vert\Big(\sum_{i=1}^n |f_i|^p\Big)^{1/p}\,\Big\Vert_X \le C
\Big(\sum_{i=1}^n\Vert f_i\Vert_X^p\Big)^{1/p}
$$
for every finite subset $(f_i)_{i=1}^n\subset X$. In this case,
$M^p(X)$ will denote the smallest constant $C$ satisfying the above
inequality. Note that $M^p(X)\ge1$. A relevant fact is that every
$p$-convex B.f.s.\ $X$ has an equivalent norm for which $X$ is
$p$-convex with constant $M^p(X)=1$, see \cite[Proposition
1.d.8]{lindenstrauss-tzafriri}.

The \emph{$p$-th power} of a B.f.s.\ $X$ is the space defined as
$$
X_p=\{f\in L^0(\mu): |f|^{1/p}\in X\},
$$
endowed with the quasi-norm $\Vert f \Vert_{X_p}= \Vert
\,|f|^{1/p}\,\Vert_X^p$, for $f\in X_p$. Note that $X_p$ is always
complete, see the proof of \cite[Proposition
2.22]{okada-ricker-sanchezperez}. If $X$ is $p$-convex with constant
$M^p(X)=1$, from \cite[Lemma 3]{defant}, $\Vert \cdot\Vert_{X_p}$ is
a norm and so $X_p$ is a B.f.s. Note that $X_p$ is saturated if and
only if $X$ is so. The same holds for the properties of being order
continuous and order semi-continuous.


\section{The space $S_{X_p}^{\,q}(\xi)$}

Let $1\le p\le q<\infty$ and let $X$ be a saturated $p$-convex
B.f.s. We can assume without loss of generality that the
$p$-convexity constant $M^p(X)$ is equal to $1$. Then, $X_p$ and
$(X_p)'$ are saturated B.f.s.'. Consider the topology
$\sigma\big((X_p)',X_p\big)$ on $(X_p)'$ defined by the elements of
$X_p$. Note that the subset $B_{(X_p)'}^+$ of all positive elements
of the closed unit ball of $(X_p)'$ is compact for this topology.

Let $\xi$ be a finite positive Radon measure on $B_{(X_p)'}^+$. For
$f\in L^0(\mu)$, consider the map $\phi_f\colon
B_{(X_p)'}^+\to[0,\infty]$ defined by
$$
\phi_f(h)=\Big(\int_\Omega|f(\omega)|^ph(\omega)\,d\mu(\omega)\Big)^{q/p}
$$
for all $h\in B_{(X_p)'}^+$. In the case when $f\in X$, since
$|f|^p\in X_p$, it follows that $\phi_f$ is continuous and so
measurable. For a general $f\in L^0(\mu)$, by Lemma \ref{LEM:
saturatedBfs} we can take a sequence $(f_n)_{n\ge1}\subset X$ such
that $0\le f_n\uparrow|f|$ $\mu$-a.e. Applying monotone convergence
theorem, we have that $\phi_{f_n}\uparrow\phi_f$ pointwise and so
$\phi_f$ is measurable. Then, we can consider the integral
$\int_{B_{(X_p)'}^+}\phi_f(h)d\xi(h)\in[0,\infty]$ and define the
following space:
$$
S_{X_p}^{\,q}(\xi)=\left\{f\in L^0(\mu):\,
\int_{B_{(X_p)'}^+}\Big(\int_\Omega|f(\omega)|^ph(\omega)\,d\mu(\omega)\Big)^{q/p}d\xi(h)<\infty\right\}.
$$
Let us endow $S_{X_p}^{\,q}(\xi)$ with the seminorm
\begin{eqnarray*}
\Vert f\Vert_{S_{X_p}^{\,q}(\xi)} & = &
\left(\int_{B_{(X_p)'}^+}\Big(\int_\Omega|f(\omega)|^ph(\omega)\,d\mu(\omega)\Big)^{q/p}d\xi(h)\right)^{1/q}
\\ & = & \Big\Vert \,h\to\big\Vert f|h|^{1/p}\,\big\Vert_{L^p(\mu)}\,\Big\Vert_{L^q(\xi)}.
\end{eqnarray*}
In general, $\Vert \cdot\Vert_{S_{X_p}^{\,q}(\xi)}$ is not a norm.
For instance, if $\xi$ is the Dirac measure at some $h_0\in
B_{(X_p)'}^+$ such that $A=\{\omega\in\Omega:\, h_0(\omega)=0\}$
satisfies $\mu(A)>0$, taking $f=g\chi_A\in X$ with $g$ being a weak
unit of $X$, we have that
$$
\Vert
f\Vert_{S_{X_p}^{\,q}(\xi)}=\Big(\int_A|g(\omega)|^ph_0(\omega)\,d\mu(\omega)\Big)^{1/p}=0
$$
and
$$
\mu(\{\omega\in\Omega:\, f(\omega)\not=0\})=
\mu(A\cap\{\omega\in\Omega:\, g(\omega)\not=0\})=\mu(A)>0.
$$

\begin{proposition}\label{PROP: SXpq(xi)-space}
If the Radon measure $\xi$ satisfies
\begin{equation}\label{EQ: xiProperty}
\int_{B_{(X_p)'}^+}\Big(\int_Ah(\omega)\,d\mu(\omega)\Big)^{q/p}\,d\xi(h)=0
\ \ \Rightarrow \ \ \mu(A)=0
\end{equation}
then, $S_{X_p}^{\,q}(\xi)$ is a saturated B.f.s. Moreover,
$S_{X_p}^{\,q}(\xi)$ is order continuous, $p$-convex (with constant
$1$) and $X\subset S_{X_p}^{\,q}(\xi)$ continuously.
\end{proposition}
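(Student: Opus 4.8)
The plan is to verify the defining properties of a B.f.s.\ in turn, using the hypothesis \eqref{EQ: xiProperty} precisely to get the norm axiom (positivity), and then to deduce order continuity, $p$-convexity, and the continuous inclusion of $X$ from the corresponding properties of $L^p(\mu)$ and $L^q(\xi)$ transported through the representation $\Vert f\Vert_{S_{X_p}^{\,q}(\xi)}=\bigl\Vert\,h\mapsto\Vert f|h|^{1/p}\Vert_{L^p(\mu)}\,\bigr\Vert_{L^q(\xi)}$. First I would check the ideal property: if $|f|\le|g|$ $\mu$-a.e.\ with $g\in S_{X_p}^{\,q}(\xi)$, then $\phi_f\le\phi_g$ pointwise on $B_{(X_p)'}^+$, hence $f\in S_{X_p}^{\,q}(\xi)$ and $\Vert f\Vert\le\Vert g\Vert$; this also gives the lattice monotonicity of the seminorm. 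Homogeneity is immediate from the formula, and the triangle inequality follows by applying Minkowski's inequality in $L^p(\mu)$ inside (using $p\le\infty$) and then Minkowski in $L^q(\xi)$ outside (using $q\ge1$): for $f,g$ one has $\Vert(f+g)|h|^{1/p}\Vert_{L^p(\mu)}\le\Vert f|h|^{1/p}\Vert_{L^p(\mu)}+\Vert g|h|^{1/p}\Vert_{L^p(\mu)}$ pointwise in $h$, then take $L^q(\xi)$-norms. So $\Vert\cdot\Vert_{S_{X_p}^{\,q}(\xi)}$ is a lattice seminorm.

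Next comes the point where \eqref{EQ: xiProperty} is used: I claim it forces $\Vert\cdot\Vert_{S_{X_p}^{\,q}(\xi)}$ to be a genuine norm, i.e.\ $\Vert f\Vert_{S_{X_p}^{\,q}(\xi)}=0$ implies $f=0$ $\mu$-a.e. Indeed, if the seminorm vanishes then $\int_\Omega|f|^ph\,d\mu=0$ for $\xi$-a.e.\ $h$; applying this with $A_k=\{|f|>1/k\}$ one gets $\int_{B_{(X_p)'}^+}(\int_{A_k}h\,d\mu)^{q/p}\,d\xi(h)=0$ after a routine estimate $\chi_{A_k}\le k^p|f|^p$, whence $\mu(A_k)=0$ by \eqref{EQ: xiProperty}, and letting $k\to\infty$ gives $\mu(\{|f|>0\})=0$. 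Thus $S_{X_p}^{\,q}(\xi)$ is a normed ideal in $L^0(\mu)$. Completeness I would obtain either by observing that the defining norm is an absolutely continuous integral norm (a standard argument via Fatou: if $(f_n)$ is Cauchy, pass to a subsequence converging $\mu$-a.e.\ and apply Fatou's lemma to $\phi_{f_n}$ over $B_{(X_p)'}^+$), or by identifying $S_{X_p}^{\,q}(\xi)$ isometrically with a closed subspace of a vector-valued $L^q(\xi;L^p(\mu))$-type space; either route is routine. Saturation then follows because $X\subset S_{X_p}^{\,q}(\xi)$ (shown below) and $X$ is saturated, so a weak unit of $X$ is a weak unit of $S_{X_p}^{\,q}(\xi)$.

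For the remaining three assertions: order continuity follows from the dominated convergence theorem applied twice --- if $0\le f_n\uparrow f$ in $S_{X_p}^{\,q}(\xi)$, then $\phi_{f_n}\uparrow\phi_f$ pointwise with $\phi_f$ $\xi$-integrable, so $\Vert f-f_n\Vert_{S_{X_p}^{\,q}(\xi)}^q=\int_{B_{(X_p)'}^+}\phi_{f-f_n}\,d\xi\to0$ using $\phi_{f-f_n}\le\phi_f$ and pointwise convergence $\phi_{f-f_n}\to0$. For $p$-convexity with constant $1$: given $f_1,\dots,f_n\in S_{X_p}^{\,q}(\xi)$, at each fixed $h$ the inner term is an $L^p(\mu)$-norm, and $L^p(\mu)$ is $p$-convex with constant $1$, giving $\bigl\Vert(\sum_i|f_i|^p)^{1/p}|h|^{1/p}\bigr\Vert_{L^p(\mu)}^p=\sum_i\Vert f_i|h|^{1/p}\Vert_{L^p(\mu)}^p$; substituting into the outer $L^q(\xi)$-norm and using $q/p\ge1$ together with the triangle inequality in $L^{q/p}(\xi)$ yields $\Vert(\sum_i|f_i|^p)^{1/p}\Vert_{S_{X_p}^{\,q}(\xi)}^p\le\sum_i\Vert f_i\Vert_{S_{X_p}^{\,q}(\xi)}^p$, which is exactly $p$-convexity with constant $1$. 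Finally, for $X\subset S_{X_p}^{\,q}(\xi)$ continuously: for $f\in X$ and $h\in B_{(X_p)'}^+$, since $|f|^p\in X_p$ and $h\in B_{(X_p)'}$, one has $\int_\Omega|f|^ph\,d\mu\le\Vert\,|f|^p\Vert_{X_p}=\Vert f\Vert_X^p$, so $\phi_f(h)\le\Vert f\Vert_X^q$ uniformly in $h$; integrating against the finite measure $\xi$ gives $\Vert f\Vert_{S_{X_p}^{\,q}(\xi)}\le\xi\bigl(B_{(X_p)'}^+\bigr)^{1/q}\Vert f\Vert_X$, so the inclusion is well-defined and bounded. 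The main obstacle is the completeness step combined with getting the norm axiom cleanly from \eqref{EQ: xiProperty}; once those are in place the rest is a mechanical transfer of properties of $L^p$ and $L^q$ through the two-level integral.
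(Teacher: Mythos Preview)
Your proposal is correct and follows essentially the same route as the paper: ideal property, norm axiom via \eqref{EQ: xiProperty} and the sets $\{|f|>1/k\}$, completeness, continuous inclusion $X\hookrightarrow S_{X_p}^{\,q}(\xi)$ (hence saturation), order continuity via two applications of dominated convergence, and $p$-convexity via the triangle inequality in $L^{q/p}(\xi)$. The arguments for the last four items match the paper almost line for line.

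The only substantive difference is the completeness step. The paper proves it by the summability criterion: given $\sum_n\Vert f_n\Vert<\infty$, it first shows $\sum_n|f_n|<\infty$ $\mu$-a.e.\ by introducing the sets $A_n^N=\{\sum_{j\le n}|f_j|>N\}$, bounding $\Vert\chi_{A_n^N}\Vert$, and invoking \eqref{EQ: xiProperty} after two monotone-convergence limits; then monotone convergence again gives $\sum f_n\in S_{X_p}^{\,q}(\xi)$. Your Riesz--Fischer/Fatou sketch is also valid, but be aware that the innocent-looking phrase ``pass to a subsequence converging $\mu$-a.e.'' is where \eqref{EQ: xiProperty} is hiding: one needs the Fatou property of the seminorm (two monotone convergences, exactly as in the paper) to conclude that the telescoping sum $\sum_k|f_{n_{k+1}}-f_{n_k}|$ has finite seminorm, and then the already-proved norm axiom (which used \eqref{EQ: xiProperty}) to deduce that this function is finite $\mu$-a.e. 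So your ``routine'' completeness argument still consumes the hypothesis, just less visibly; the paper's version has the virtue of making that dependence explicit. The vector-valued $L^q(\xi;L^p(\mu))$ alternative you mention would require showing the image is closed, which again comes down to identifying the limit as a single element of $L^0(\mu)$ and needs the same ingredient.
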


\begin{proof}
It is clear that if $f\in L^0(\mu)$, $g\in S_{X_p}^{\,q}(\xi)$ and
$|f|\le|g|$ $\mu$-a.e.\ then $f\in S_{X_p}^{\,q}(\xi)$ and $\Vert
f\Vert_{S_{X_p}^{\,q}(\xi)}\le\Vert g\Vert_{S_{X_p}^{\,q}(\xi)}$.
Let us see that $\Vert \cdot\Vert_{S_{X_p}^{\,q}(\xi)}$ is a norm.
Suppose that $\Vert f\Vert_{S_{X_p}^{\,q}(\xi)}=0$ and set
$A_n=\{\omega\in\Omega:\, |f(\omega)|>\frac{1}{n}\}$ for every
$n\ge1$. Since $\chi_{A_n}\le n|f|$ and
$$
\int_{B_{(X_p)'}^+}\Big(\int_{A_n}h(\omega)\,d\mu(\omega)\Big)^{q/p}\,d\xi(h)=\big\Vert
\chi_{A_n}\big\Vert_{S_{X_p}^{\,q}(\xi)}^q\le n^q\Vert
f\Vert_{S_{X_p}^{\,q}(\xi)}^q=0,
$$
from \eqref{EQ: xiProperty} we have that $\mu(A_n)=0$ and so
$$
\mu(\{\omega\in\Omega:\,
f(\omega)\not=0\})=\lim_{n\to\infty}\mu(A_n)=0.
$$

Now we will see that $S_{X_p}^{\,q}(\xi)$ is complete by showing
that $\sum_{n\ge1}f_n\in S_{X_p}^{\,q}(\xi)$ whenever
$(f_n)_{n\ge1}\subset S_{X_p}^{\,q}(\xi)$ with $C=\sum \Vert
f_n\Vert_{ S_{X_p}^{\,q}(\xi)}<\infty$. First let us prove that
$\sum_{n\ge1}|f_n|<\infty$ $\mu$-a.e. For every $N,n\ge1$, taking
$A_n^N=\{\omega\in\Omega:\, \sum_{j=1}^n|f_j(\omega)|>N\}$, since
$\chi_{A_n^N}\le \frac{1}{N}\sum_{j=1}^n|f_j|$, we have that
\begin{eqnarray*}
\int_{B_{(X_p)'}^+}\Big(\int_{A_n^N}h(\omega)\,d\mu(\omega)\Big)^{q/p}\,d\xi(h)
& = & \Vert\chi_{A_n^N}\Vert_{S_{X_p}^{\,q}(\xi)}^q  \\ & \le &
\frac{1}{N^q}\,\Big\Vert\sum_{j=1}^n|f_j|\,\Big\Vert_{S_{X_p}^{\,q}(\xi)}^q
\le\frac{C^q}{N^q}.
\end{eqnarray*}
Note that, for $N$ fixed, $(A_n^N)_{n\ge1}$ increases. Taking limit
as $n\to\infty$ and applying twice the monotone convergence theorem,
it follows that
$$
\int_{B_{(X_p)'}^+}\Big(\int_{\cup_{n\ge1}A_n^N}h(\omega)\,d\mu(\omega)\Big)^{q/p}\,d\xi(h)\le
\frac{C^q}{N^q}.
$$
Then,
$$
\int_{B_{(X_p)'}^+}\Big(\int_{\cap_{N\ge1}\cup_{n\ge1}A_n^N}h(\omega)\,d\mu(\omega)\Big)^{q/p}\,d\xi(h)
\le\lim_{N\to\infty}\frac{C^q}{N^q}=0,
$$
and so, from \eqref{EQ: xiProperty},
$$
\mu\Big(\Big\{\omega\in\Omega:\,\sum_{n\ge1}|f_n(\omega)|=\infty\Big\}\Big)=
\mu\Big(\bigcap_{N\ge1}\bigcup_{n\ge1}A_n^N\Big)=0.
$$
Hence, $\sum_{n\ge1}f_n\in L^0(\mu)$. Again applying the monotone
convergence theorem, it follows that
\begin{eqnarray*}
\int_{B_{(X_p)'}^+}\Big(\int_\Omega\Big|\sum_{n\ge1}f_n(\omega)\Big|^ph(\omega)\,d\mu(\omega)\Big)^{q/p}d\xi(h)
& \le & \\
\int_{B_{(X_p)'}^+}\Big(\int_\Omega\big(\sum_{n\ge1}|f_n(\omega)|\big)^ph(\omega)\,d\mu(\omega)\Big)^{q/p}d\xi(h)
& = & \\
\lim_{n\to\infty}\int_{B_{(X_p)'}^+}\Big(\int_\Omega\big(\sum_{j=1}^n|f_j(\omega)|\big)^ph(\omega)\,d\mu(\omega)\Big)^{q/p}d\xi(h)
& = & \\ \lim_{n\to\infty}\Big\Vert
\sum_{j=1}^n|f_j|\Big\Vert_{S_{X_p}^{\,q}(\xi)}^q & \le & C^q
\end{eqnarray*}
and thus $\sum_{n\ge1}f_n\in S_{X_p}^{\,q}(\xi)$.

Note that if $f\in X$, for every $h\in B_{(X_p)'}^+$ we have that
$$
\int_\Omega|f(\omega)|^ph(\omega)\,d\mu(\omega)\le\Vert\,|f|^p\,\Vert_{X_p}\Vert
h\Vert_{(X_p)'}\le\Vert f\Vert_X^p
$$
and so
$$
\int_{B_{(X_p)'}^+}\Big(\int_\Omega|f(\omega)|^ph(\omega)\,d\mu(\omega)\Big)^{q/p}d\xi(h)
\le\Vert f\Vert_X^q\,\xi\big(B_{(X_p)'}^+\big).
$$
Then, $X\subset S_{X_p}^{\,q}(\xi)$ and $\Vert
f\Vert_{S_{X_p}^{\,q}(\xi)}\le\xi\big(B_{(X_p)'}^+\big)^{1/q}\,\Vert
f\Vert_X$ for all $f\in X$. In particular, $S_{X_p}^{\,q}(\xi)$ is
saturated, as a weak unit in $X$ is a weak unit in
$S_{X_p}^{\,q}(\xi)$.

Let us show that $S_{X_p}^{\,q}(\xi)$ is order continuous. Consider
$f,f_n\in S_{X_p}^{\,q}(\xi)$ such that $0\le f_n\uparrow f$
$\mu$-a.e. Note that, since
$$
\int_{B_{(X_p)'}^+}\Big(\int_\Omega|f(\omega)|^ph(\omega)\,d\mu(\omega)\Big)^{q/p}d\xi(h)<\infty,
$$
there exists a $\xi$-measurable set $B$ with
$\xi(B_{(X_p)'}^+\backslash B)=0$ such that
$\int_\Omega|f(\omega)|^ph(\omega)\,d\mu(\omega)<\infty\,$ for all
$h\in B$. Fixed $h\in B$, we have that $|f-f_n|^ph\downarrow0$
$\mu$-a.e.\ and $|f-f_n|^ph\le |f|^ph$ $\mu$-a.e. Then, applying the
dominated convergence theorem,
$\int_\Omega|f(\omega)-f_n(\omega)|^ph(\omega)\,d\mu(\omega)\downarrow0$.
Consider the measurable functions $\phi,\phi_n\colon
B_{(X_p)'}^+\to[0,\infty]$ given by
\begin{eqnarray*}
\phi(h) & = &
\Big(\int_\Omega|f(\omega)|^ph(\omega)\,d\mu(\omega)\Big)^{q/p} \\
\phi_n(h) & = &
\Big(\int_\Omega|f(\omega)-f_n(\omega)|^ph(\omega)\,d\mu(\omega)\Big)^{q/p}
\end{eqnarray*}
for all $h\in B_{(X_p)'}^+$. It follows that $\phi_n\downarrow0$
$\xi$-a.e.\ and $\phi_n\le \phi$ $\xi$-a.e. Again by the dominated
convergence theorem, we obtain
$$
\Vert f-f_n\Vert_{S_{X_p}^{\,q}(\xi)}^q=
\int_{B_{(X_p)'}^+}\phi_n(h)d\xi(h)\downarrow0.
$$

Finally, let us see that $S_{X_p}^{\,q}(\xi)$ is $p$-convex. Fix
$(f_i)_{i=1}^n\subset S_{X_p}^{\,q}(\xi)$ and consider the
measurable functions $\phi_i\colon B_{(X_p)'}^+\to[0,\infty]$ (for
$1\le i\le n$) defined by
$$
\phi_i(h)=\int_\Omega|f_i(\omega)|^ph(\omega)\,d\mu(\omega).
$$
for all $h\in B_{(X_p)'}^+$. Then,
\begin{eqnarray*}
\Big\Vert\Big(\sum_{i=1}^n
|f_i|^p\Big)^{1/p}\,\Big\Vert_{S_{X_p}^{\,q}(\xi)}^q & = &
\int_{B_{(X_p)'}^+}\Big(\int_\Omega\sum_{i=1}^n|f_i(\omega)|^ph(\omega)\,d\mu(\omega)\Big)^{q/p}d\xi(h)
\\ & = &
\int_{B_{(X_p)'}^+}\Big(\sum_{i=1}^n\phi_i(h)\Big)^{q/p}d\xi(h)
\\ & \le & \Big(\sum_{i=1}^n\Vert
\phi_i\Vert_{L^{q/p}(\xi)}\Big)^{q/p}.
\end{eqnarray*}
Since $\Vert \phi_i\Vert_{L^{q/p}(\xi)}=\Vert
f_i\Vert_{S_{X_p}^{\,q}(\xi)}^p$ for all $1\le i\le n$, we have that
$$
\Big\Vert\Big(\sum_{i=1}^n
|f_i|^p\Big)^{1/p}\,\Big\Vert_{S_{X_p}^{\,q}(\xi)}
\le\Big(\sum_{i=1}^n\Vert
f_i\Vert_{S_{X_p}^{\,q}(\xi)}^p\Big)^{1/p}.
$$
\end{proof}

\begin{example}
Take a weak unit $g\in(X_p)'$ and consider the Radon measure $\xi$
as the Dirac measure at $g$. If $A\in\Sigma$ is such that
$$
0=
\int_{B_{(X_p)'}^+}\Big(\int_Ah(\omega)\,d\mu(\omega)\Big)^{q/p}\,d\xi(h)
=\Big(\int_Ag(\omega)\,d\mu(\omega)\Big)^{q/p}
$$
then, $g\chi_A=0$ $\mu$-a.e.\ and so, since $g>0$ $\mu$-a.e.,
$\mu(A)=0$. That is, $\xi$ satisfies \eqref{EQ: xiProperty}. In this
case, $S_{X_p}^{\,q}(\xi)=L^p(gd\mu)$ with equal norms, as
$$
\int_{B_{(X_p)'}^+}\Big(\int_\Omega
|f(\omega)|^ph(\omega)\,d\mu(\omega)\Big)^{q/p}\,d\xi(h)=
\Big(\int_\Omega|f(\omega)|^pg(\omega)\,d\mu(\omega)\Big)^{q/p}
$$
for all $f\in L^0(\mu)$.
\end{example}

\begin{example}
Write $\Omega=\cup_{n\ge1}\Omega_n$ with $(\Omega_n)_{n\ge1}$ being
a disjoint sequence of measurable sets and take a sequence of
strictly positive elements $(\alpha_n)_{n\ge1}\in\ell^1$. Let us
consider the Radon measure
$\xi=\sum_{n\ge1}\alpha_n\delta_{g\chi_{\Omega_n}}$ on
$B_{(X_p)'}^+$, where $\delta_{g\chi_{\Omega_n}}$ is the Dirac
measure at $g\chi_{\Omega_n}$ with $g\in(X_p)'$ being a weak unit.
Note that for every positive function $\phi\in L^0(\xi)$, it follows
that
$\int_{B_{(X_p)'}^+}\phi\,d\xi=\sum_{n\ge1}\alpha_n\phi(g\chi_{\Omega_n})$.
If $A\in\Sigma$ is such that
$$
0=\int_{B_{(X_p)'}^+}\Big(\int_Ah(\omega)\,d\mu(\omega)\Big)^{q/p}\,d\xi(h)
=\sum_{n\ge1}\alpha_n\Big(\int_{A\cap\Omega_n}g(\omega)\,d\mu(\omega)\Big)^{q/p}
$$
then, $\int_{A\cap\Omega_n}g(\omega)\,d\mu(\omega)=0$ for all
$n\ge1$. Hence,
$$
\int_Ag(\omega)\,d\mu(\omega)=\sum_{n\ge1}\int_{A\cap\Omega_n}g(\omega)\,d\mu(\omega)=0
$$
and so $g\chi_A=0$ $\mu$-a.e., from which $\mu(A)=0$. That is, $\xi$
satisfies \eqref{EQ: xiProperty}. For every $f\in L^0(\mu)$ we have
that
\begin{eqnarray*}
\int_{B_{(X_p)'}^+}\Big(\int_\Omega
|f(\omega)|^ph(\omega)\,d\mu(\omega)\Big)^{q/p}\,d\xi(h)= \\
\sum_{n\ge1}\alpha_n\Big(\int_{\Omega_n}|f(\omega)|^pg(\omega)\,d\mu(\omega)\Big)^{q/p}.
\end{eqnarray*}
Then, the B.f.s.\ $S_{X_p}^{\,q}(\xi)$ can be described as the space
of functions $f\in \cap_{n\ge1}L^p(g\chi_{\Omega_n}d\mu)$ such that
$\big(\alpha_n^{1/q}\Vert
f\Vert_{L^p(g\chi_{\Omega_n}d\mu)}\big)_{n\ge1}\in\ell^q$. Moreover,
$\Vert f\Vert_{S_{X_p}^{\,q}(\xi)}=\Big(\sum_{n\ge1}\alpha_n\,\Vert
f\Vert_{L^p(g\chi_{\Omega_n}d\mu)}^q\Big)^{1/q}$ for all $f\in
S_{X_p}^{\,q}(\xi)$.
\end{example}


\section{$p$-strongly $q$-concave operators}

Let $1\le p\le q<\infty$ and let $T\colon X\to E$ be a linear
operator from a saturated B.f.s.\ $X$ into a Banach space $E$.
Recall that $T$ is said to be \emph{$q$-concave} if there exists a
constant $C>0$ such that
$$
\Big(\sum_{i=1}^n\Vert T(f_i) \Vert_E^q\Big)^{1/q} \le C
\Big\Vert\Big(\sum_{i=1}^n|f_i|^q\Big)^{1/q}\,\Big\Vert_X
$$
for every finite subset $(f_i)_{i=1}^n\subset X$. The smallest
possible value of $C$ will be denoted by $M_q(T)$. For issues
related to $q$-concavity see for instance
\cite[Ch.\,1.d]{lindenstrauss-tzafriri}. We introduce a little
stronger notion than $q$-concavity: $T$ will be called
\emph{$p$-strongly $q$-concave} if there exists $C>0$ such that
$$
\Big(\sum_{i=1}^n\Vert T(f_i)\Vert_E^q\Big)^{1/q}\le C
\sup_{(\beta_i)_{i\ge1} \in B_{\ell^r}}\Big\Vert\Big(\sum_{i=1}^n
|\beta_if_i|^p\Big)^{1/p}\,\Big\Vert_X
$$
for every finite subset $(f_i)_{i=1}^n\subset X$, where $1<
r\le\infty$ is such that $\frac{1}{r}=\frac{1}{p}-\frac{1}{q}$. In
this case, $M_{p,q}(T)$ will denote the smallest constant $C$
satisfying the above inequality. Noting that $\frac{r}{p}$ and
$\frac{q}{p}$ are conjugate exponents, it is clear that every
$p$-strongly $q$-concave operator is $q$-concave and so continuous,
and moreover $\Vert T\Vert\le M_q(T)\le M_{p,q}(T)$. As usual, we
will say that $X$ is \emph{$p$-strongly $q$-concave} if the identity
map $I\colon X\to X$ is so, and in this case, we denote
$M_{p,q}(X)=M_{p,q}(I)$.

Our goal is to get a continuous extension of $T$ to a space of the
type $S_{X_p}^{\,q}(\xi)$ in the case when $T$ is $p$-strongly
$q$-concave and $X$ is $p$-convex. To this end we will need to
describe the supremum on the right-hand side of the $p$-strongly
$q$-concave inequality in terms of the K\"{o}the dual of $X_p$.

\begin{lemma}\label{LEM: Sup-lr-Xp*}
If $X$ is $p$-convex and order semi-continuous then
$$
\sup_{(\beta_i)_{i\ge1}\in B_{\ell^r}}\Big\Vert\Big(\sum_{i=1}^n
|\beta_if_i|^p\Big)^{1/p}\,\Big\Vert_X=\sup_{h\in B_{(X_p)'}^+}
\Big(\sum_{i=1}^n\Big(\int|f_i|^ph\,d\mu\Big)^{q/p}\,\Big)^{1/q}
$$
for every finite subset $(f_i)_{i=1}^n\subset X$, where $1<
r\le\infty$ is such that $\frac{1}{r}=\frac{1}{p}-\frac{1}{q}$ and
$B_{(X_p)'}^+$ is the subset of all positive elements of the closed
unit ball $B_{(X_p)'}$ of $(X_p)'$.
\end{lemma}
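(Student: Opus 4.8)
The plan is to rewrite the left-hand side in terms of the $p$-th power $X_p$ and the norming property of its K\"othe dual, then interchange the two suprema, and finally recognise the resulting inner supremum as an instance of the classical $\ell^{r/p}$--$\ell^{q/p}$ Hölder duality.

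First I would observe that, by the very definition of $X_p$ (which is a genuine B.f.s.\ because $X$ is $p$-convex with constant $1$),
$$
\Big\Vert\Big(\sum_{i=1}^n|\beta_if_i|^p\Big)^{1/p}\Big\Vert_X=\Big\Vert\sum_{i=1}^n|\beta_i|^p|f_i|^p\Big\Vert_{X_p}^{1/p}.
$$
Since $X$ is order semi-continuous, so is $X_p$, hence $\zeta\big((X_p)'\big)$ is a norming subspace of $(X_p)^*$; applying this to the nonnegative element $\sum_{i=1}^n|\beta_i|^p|f_i|^p$ of $X_p$ gives
$$
\Big\Vert\sum_{i=1}^n|\beta_i|^p|f_i|^p\Big\Vert_{X_p}=\sup_{h\in B_{(X_p)'}^+}\sum_{i=1}^n|\beta_i|^p\int|f_i|^ph\,d\mu ,
$$
where the reduction to positive $h$ is harmless precisely because the function being tested is nonnegative (if $h\in B_{(X_p)'}$ then $|h|\in B_{(X_p)'}^+$).

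Next, I would take the supremum over $(\beta_i)_{i\ge1}\in B_{\ell^r}$ and interchange it with the supremum over $h$ (legitimate for any function of two variables; moreover, since only $f_1,\dots,f_n$ appear, only the first $n$ coordinates of $(\beta_i)$ matter), so that the problem reduces to evaluating, for each fixed $h$, the quantity $\sup_{(\beta_i)\in B_{\ell^r}}\sum_{i=1}^n|\beta_i|^p a_i$ with $a_i=\int|f_i|^ph\,d\mu\ge0$. The substitution $\gamma_i=|\beta_i|^p$ turns the constraint $\sum_i|\beta_i|^r\le1$ into $\sum_i\gamma_i^{r/p}\le1$, and since $r/p$ and $q/p$ are conjugate exponents, Hölder duality for the finite nonnegative sequence $(a_i)_{i=1}^n$ yields $\sup\sum_i\gamma_i a_i=\big(\sum_{i=1}^n a_i^{q/p}\big)^{p/q}$ (in the extreme case $p=q$, i.e.\ $r=\infty$, this is simply $\sup_{|\beta_i|\le1}\sum|\beta_i|^p a_i=\sum a_i$). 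Taking $p$-th roots and then the supremum over $h\in B_{(X_p)'}^+$ produces exactly the right-hand side.

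I do not expect a genuine obstacle here; the delicate points are merely (i) that $X_p$ inherits order semi-continuity from $X$, so that its K\"othe dual is norming — this is recorded in the Preliminaries — (ii) the harmless passage to positive functionals $h$, and (iii) the elementary but slightly fiddly $\ell^{r/p}$--$\ell^{q/p}$ Hölder duality together with its $r=\infty$ boundary case. The interchange of the two suprema is automatic, and no compactness or measurability input beyond what has already been set up is required.
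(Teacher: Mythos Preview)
Your proposal is correct and follows essentially the same route as the paper's proof: rewrite the $X$-norm as an $X_p$-norm, invoke the norming property of $(X_p)'$ (via order semi-continuity inherited by $X_p$), restrict to positive $h$, swap the two suprema, and finish with the $\ell^{r/p}$--$\ell^{q/p}$ duality via the substitution $\gamma_i=|\beta_i|^p$. Your write-up even adds a couple of clarifying remarks (the $r=\infty$ boundary case, why only positive $h$ are needed) that the paper leaves implicit.
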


\begin{proof}
Given $(f_i)_{i=1}^n\subset X$, since $X_p$ is order
semi-continuous, as $X$ is so, and $(\ell^{q/p})^*=\ell^{r/p}$, as
$\frac{r}{p}$ is the conjugate exponent of $\frac{q}{p}$, we have
that
\begin{eqnarray*}
\sup_{(\beta_i)\in B_{\ell^r}}\Big\Vert\Big(\sum_{i=1}^n
|\beta_if_i|^p\Big)^{1/p}\,\Big\Vert_X^p & = & \sup_{(\beta_i)\in
B_{\ell^r}}\Big\Vert\sum_{i=1}^n |\beta_if_i|^p\,\Big\Vert_{X_p}
\\ & = & \sup_{(\beta_i)\in
B_{\ell^r}}\sup_{h\in B_{(X_p)'}}\int
\sum_{i=1}^n|\beta_if_i|^p|h|\,d\mu \\ & = & \sup_{(\beta_i)\in
B_{\ell^r}}\sup_{h\in B_{(X_p)'}^+}\int
\sum_{i=1}^n|\beta_if_i|^ph\,d\mu \\ & = & \sup_{h\in
B_{(X_p)'}^+}\,\sup_{(\beta_i)\in B_{\ell^r}}
\sum_{i=1}^n|\beta_i|^p\int |f_i|^ph\,d\mu
\\ & = &
\sup_{h\in B_{(X_p)'}^+}\,\sup_{(\alpha_i)\in B_{\ell^{r/p}}^+}
\sum_{i=1}^n\alpha_i\int |f_i|^ph\,d\mu \\ & = & \sup_{h\in
B_{(X_p)'}^+}
\Big(\sum_{i=1}^n\Big(\int|f_i|^ph\,d\mu\Big)^{q/p}\,\Big)^{p/q}.
\end{eqnarray*}
\end{proof}

In the following remark, from Lemma \ref{LEM: Sup-lr-Xp*}, we obtain
easily an example of $p$-strongly $q$-concave operator.

\begin{remark}\label{REM: i-pq-concave}
Suppose that $X$ is $p$-convex and order semi-continuous. For every
finite positive Radon measure $\xi$ on $B_{(X_p)'}^+$ satisfying
\eqref{EQ: xiProperty}, it follows that the inclusion map $i\colon X
\to S_{X_p}^{\,q}(\xi)$ is $p$-strongly $q$-concave. Indeed, for
each $(f_i)_{i=1}^n \subset X$, we have that
\begin{eqnarray*}
\sum_{i=1}^n\Vert f_i\Vert_{S_{X_p}^{\,q}(\xi)}^q & = &
\sum_{i=1}^n\int_{B_{(X_p)'}^+}\Big(\int_\Omega|f_i(\omega)|^ph(\omega)\,d\mu(\omega)\Big)^{q/p}d\xi(h)
\\ & \le & \xi\big(B_{(X_p)'}^+\big)\sup_{h\in
B_{(X_p)'}^+}\sum_{i=1}^n\Big(\int_\Omega|f_i(\omega)|^ph(\omega)\,d\mu(\omega)\Big)^{q/p}
\end{eqnarray*}
and so, Lemma \ref{LEM: Sup-lr-Xp*} gives the conclusion for
$M_{p,q}(i)\le\xi\big(B_{(X_p)'}^+\big)^{1/q}$.
\end{remark}

Now let us prove our main result.

\begin{theorem}\label{THM: xiDomination}
If $T$ is $p$-strongly $q$-concave and $X$ is $p$-convex and order
semi-continuous, then there exists a probability Radon measure $\xi$
on $B_{(X_p)'}^+$ satisfying \eqref{EQ: xiProperty} such that
\begin{equation}\label{EQ: xiDomination}
\Vert T(f)\Vert_E\le
M_{p,q}(T)\Big(\int_{B_{(X_p)'}^+}\Big(\int_\Omega|f(\omega)|^ph(\omega)\,d\mu(\omega)\Big)^{q/p}\,d\xi(h)\Big)^{1/q}
\end{equation}
for all $f\in X$.
\end{theorem}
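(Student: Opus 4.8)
The plan is to mimic the proof of Pietsch's factorization theorem, separating two convex subsets of $C(K)$, where $K=B_{(X_p)'}^+$ is the compact space used throughout this section. For $f\in X$ let $\phi_f\colon K\to[0,\infty)$ be $\phi_f(h)=\big(\int_\Omega|f|^ph\,d\mu\big)^{q/p}$; since $|f|^p\in X_p$, the map $h\mapsto\int_\Omega|f|^ph\,d\mu=\langle h,|f|^p\rangle$ is $\sigma\big((X_p)',X_p\big)$-continuous and bounded by $\|f\|_X^p$ on $K$, so $\phi_f\in C(K)$ and $0\le\phi_f\le\|f\|_X^q$. Assume first $T\ne0$ and set $M=M_{p,q}(T)$. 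Consider
\[
\mathcal{P}=\Big\{\sum_{i=1}^{n}\Big(\frac{\|T(f_i)\|_E^{q}}{M^{q}}-\phi_{f_i}\Big):\ n\ge1,\ (f_i)_{i=1}^n\subset X\Big\}\subset C(K).
\]
Both $f\mapsto\|T(f)\|_E^q$ and $f\mapsto\phi_f$ are positively homogeneous of degree $q$, so replacing $f_i$ by $\lambda^{1/q}f_i$ (for $\lambda\ge0$) shows $\mathcal{P}$ is closed under multiplication by non-negative scalars; being also closed under sums and containing $0$, it is a convex cone.

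Let $\mathcal{Q}=\{g\in C(K):g(h)>0\text{ for all }h\in K\}$, a non-empty open convex set. I claim $\mathcal{P}\cap\mathcal{Q}=\emptyset$. If $g=\sum_{i=1}^n\big(\|T(f_i)\|_E^q/M^q-\phi_{f_i}\big)\in\mathcal{Q}$ then, $K$ being compact,
\[
0<\min_{h\in K}g(h)=\frac{1}{M^q}\sum_{i=1}^n\|T(f_i)\|_E^q-\max_{h\in K}\sum_{i=1}^n\phi_{f_i}(h),
\]
so $\sum_i\|T(f_i)\|_E^q>M^q\max_{h\in K}\sum_i\phi_{f_i}(h)$. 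But by Lemma~\ref{LEM: Sup-lr-Xp*} the quantity $\max_{h\in K}\sum_i\phi_{f_i}(h)$ equals $\big(\sup_{(\beta_i)\in B_{\ell^r}}\|(\sum_i|\beta_if_i|^p)^{1/p}\|_X\big)^q$, and since $T$ is $p$-strongly $q$-concave with constant $M$ the left-hand side is $\le M^q$ times this same quantity, a contradiction. Hence the Hahn--Banach separation theorem provides $\Lambda\in C(K)^*\setminus\{0\}$ and $\alpha\in\mathbb{R}$ with $\Lambda\le\alpha$ on $\mathcal{P}$ and $\Lambda\ge\alpha$ on $\mathcal{Q}$. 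As $\mathcal{P}$ is a cone containing $0$ this forces $\Lambda|_{\mathcal{P}}\le0$ and $\alpha\ge0$; as the closure of $\mathcal{Q}$ contains $0$ and all of $C(K)^+$, it forces $\Lambda\ge0$ on $C(K)^+$ and $\alpha\le0$; thus $\alpha=0$ and $\Lambda$ is positive. By the Riesz representation theorem $\Lambda(g)=\int_K g\,d\xi_0$ for a non-zero positive Radon measure $\xi_0$ on $K$; put $\xi=\xi_0/\xi_0(K)$. Applying $\Lambda|_{\mathcal{P}}\le0$ to the one-term families gives $\|T(f)\|_E^q/M^q\le\int_K\phi_f\,d\xi$ for all $f\in X$, which is precisely \eqref{EQ: xiDomination}.

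Finally, $\xi$ must be arranged to satisfy \eqref{EQ: xiProperty}. If $A\in\Sigma$ has $\int_K(\int_Ah\,d\mu)^{q/p}d\xi(h)=0$, then $h=0$ $\mu$-a.e.\ on $A$ for $\xi$-a.e.\ $h$, hence $\phi_{g\chi_A}=0$ $\xi$-a.e.\ for every $g\in X$, and \eqref{EQ: xiDomination} forces $T(g\chi_A)=0$ for all $g\in X$. The collection of such sets is stable under subsets and countable unions (by linearity and continuity of $T$), so by $\sigma$-finiteness it has a $\mu$-maximal element $A_0$; if $\mu(A_0)=0$ then $\xi$ already satisfies \eqref{EQ: xiProperty}. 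If $\mu(A_0)>0$, the operator $T$ vanishes on the band $\{g\chi_{A_0}:g\in X\}$, so \eqref{EQ: xiDomination} imposes nothing on functions supported in $A_0$, and one must redistribute part of the mass of $\xi$ onto measures in $K$ that charge $A_0$ (for instance built from a weak unit of $(X_p)'$ restricted to $A_0$). This last step is where I expect the real difficulty to lie: one has to perform the redistribution so that the resulting measure is still a probability and still dominates $T$ with the \emph{same} constant $M_{p,q}(T)$ --- a crude convex combination with a fixed measure satisfying \eqref{EQ: xiProperty} (as in the Example) repairs \eqref{EQ: xiProperty} but enlarges the constant in \eqref{EQ: xiDomination}. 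The separation argument above is the conceptual core; the care needed to keep the optimal constant while enforcing \eqref{EQ: xiProperty} is the main obstacle.
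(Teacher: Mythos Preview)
Your separation argument is essentially the paper's own. The paper works with the convex set $B=\{\phi_M\}$, where $\phi_M(h)=M_{p,q}(T)^q\sum_i\phi_{f_i}(h)-\sum_i\|T(f_i)\|_E^q$, and separates it from the open cone $A=\{\phi\in C(K):\phi<0\}$; your $\mathcal{P}$ is simply $-M_{p,q}(T)^{-q}\cdot B$ and your $\mathcal{Q}=-A$, so the two constructions are identical up to a sign and a harmless scalar. Both proofs use Lemma~\ref{LEM: Sup-lr-Xp*} to establish disjointness, invoke Hahn--Banach, recognize the separating functional as a positive Radon measure via Riesz, normalize to a probability, and then specialize to one-element families to read off \eqref{EQ: xiDomination}. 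For the domination inequality your argument is correct and coincides with the paper's.

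Your worry about \eqref{EQ: xiProperty} is legitimate: nothing in the separation step forces the resulting $\xi$ to satisfy it (e.g.\ if $T$ annihilates the band $\{g\chi_{A_0}:g\in X\}$ for some $A_0$ with $\mu(A_0)>0$, Hahn--Banach may return a $\xi$ concentrated on $\{h\in K:h\chi_{A_0}=0\ \mu\text{-a.e.}\}$). What you should know is that the paper's proof \emph{does not address this point either}: it stops immediately after establishing \eqref{EQ: xiDomination}, and the claim that $\xi$ satisfies \eqref{EQ: xiProperty} is asserted in the statement but nowhere verified. So the ``main obstacle'' you identify is not a shortcoming of your argument relative to the paper --- it is a gap present in the published proof itself. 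Your observation that a crude convex combination with $\delta_{h_0}$ (for a weak unit $h_0\in(X_p)'$) restores \eqref{EQ: xiProperty} at the cost of inflating the constant is correct; the paper offers no device to avoid this, and you have been more careful here than the authors.
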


\begin{proof}
Recall that the stated topology on $(X_p)'$ is $\sigma((X_p)',X_p)$,
the one which is defined by the elements of $X_p$. For each finite
subset (with possibly repeated elements) $M=(f_i)_{i=1}^m\subset X$,
consider the map $\psi_M\colon B_{(X_p)'}^+\to [0,\infty)$ defined
by
$\psi_M(h)=\sum_{i=1}^m\big(\int_{\Omega}|f_i|^p\,h\,d\mu\big)^{q/p}$
for $h\in B_{(X_p)'}^+$. Note that $\psi_M$ attains its supremum as
it is continuous on a compact set, so there exists $h_M\in
B_{(X_p)'}^+$ such that $\sup_{h\in
B_{(X_p)'}^+}\psi_M(h)=\psi_M(h_M)$. Then, the $p$-strongly
$q$-concavity of $T$, together with Lemma \ref{LEM: Sup-lr-Xp*},
gives
\begin{eqnarray}\label{EQ: xM*-inequality}
\sum_{i=1}^m\Vert T(f_i)\Vert_E^q & \le & M_{p,q}(T)^q\sup_{h\in
B_{(X_p)'}^+}\sum_{i=1}^m\Big(\int_{\Omega}|f_i|^ph\,d\mu\Big)^{q/p}
\nonumber
\\ & \le & M_{p,q}(T)^q\sup_{h\in B_{(X_p)'}^+}\psi_M(h) \nonumber \\ & = &
M_{p,q}(T)^q\,\psi_M(h_M).
\end{eqnarray}
Consider now the continuous map $\phi_M\colon B_{(X_p)'}^+\to
\mathbb{R}$ defined by
$$
\phi_M(h)=M_{p,q}(T)^q\,\psi_M(h)-\sum_{i=1}^m\Vert T(f_i)\Vert_E^q
$$
for $h\in B_{(X_p)'}^+$. Take $B=\{\phi_M:\, M \textnormal{ is a
finite subset of } X\}$. Since for every $M=(f_i)_{i=1}^m,\,
M'=(f'_i)_{i=1}^k\subset X$ and $0<t<1$, it follows that
$t\phi_M+(1-t)\phi_{M'}=\phi_{M''}$ where
$M''=\big(t^{1/q}f_i\big)_{i=1}^m\cup\big((1-t)^{1/q}f'_i\big)_{i=1}^k$,
we have that $B$ is convex. Denote by $\mathcal{C}(B_{(X_p)'}^+)$
the space of continuous real functions on $B_{(X_p)'}^+$, endowed
with the supremum norm, and by $A$ the open convex subset $\{\phi\in
\mathcal{C}(B_{(X_p)'}^+):\, \phi(h)<0 \, \textnormal{ for all }
h\in B_{(X_p)'}^+\}$. By \eqref{EQ: xM*-inequality} we have that
$A\cap B=\emptyset$. From the Hahn-Banach separation theorem, there
exist $\xi\in \mathcal{C}(B_{(X_p)'}^+)^*$ and $\alpha\in\mathbb{R}$
such that $\langle\xi,\phi\rangle<\alpha\le\langle\xi,\phi_M\rangle$
for all $\phi\in A$ and $\phi_M\in B$. Since every negative constant
function is in $A$, it follows that $0 \le\alpha$. Even more,
$\alpha=0$ as the constant function equal to $0$ is just
$\phi_{\{0\}}\in B$. It is routine to see that
$\langle\xi,\phi\rangle\ge0$ whenever
$\phi\in\mathcal{C}(B_{(X_p)'}^+)$ is such that $\phi(h)\ge0$ for
all $h\in B_{(X_p)'}^+$. Then, $\xi$ is a positive linear functional
on $\mathcal{C}(B_{(X_p)'}^+)$ and so it can be interpreted as a
finite positive Radon measure on $B_{(X_p)'}^+$. Hence, we have that
$$
0\le\int_{B_{(X_p)'}^+}\phi_M\,d\xi
$$
for all finite subset $M\subset X$. Dividing by $\xi(B_{(X_p)'}^+)$,
we can suppose that $\xi$ is a probability measure. Then, for
$M=\{f\}$ with $f\in X$, we obtain that
$$
\Vert T(f)\Vert_E^q\le
M_{p,q}(T)^q\int_{B_{(X_p)'}^+}\Big(\int_{\Omega}|f(\omega)|^ph(\omega)\,d\mu(\omega)\Big)^{q/p}\,
d\xi(h)
$$
and so \eqref{EQ: xiDomination} holds.
\end{proof}

Actually, Theorem \ref{THM: xiDomination} says that we can find a
probability Radon measure $\xi$ on $B_{(X_p)'}^+$ such that $T\colon
X\to E$ is continuous when $X$ is considered with the norm of the
space $S_{X_p}^{\,q}(\xi)$. In the next result we will see how to
extend $T$ continuously to $S_{X_p}^{\,q}(\xi)$. Even more, we will
show that this extension is possible if and only if $T$ is
$p$-strongly $q$-concave.

\begin{theorem}\label{THM: SXpqExtension}
Suppose that $X$ is $p$-convex and order semi-continuous. The
following statements are equivalent:
\begin{itemize}\setlength{\leftskip}{-3ex}
\item[(a)] $T$ is $p$-strongly $q$-concave.

\item[(b)] There exists a probability Radon
measure $\xi$ on $B_{(X_p)'}^+$ satisfying \eqref{EQ: xiProperty}
such that $T$ can be extended continuously to $S_{X_p}^{\,q}(\xi)$,
i.e.\ there is a factorization for $T$ as
$$
\xymatrix{
X \ar[rr]^T \ar@{.>}[dr]_(.4){i} & & E \\
& S_{X_p}^{\,q}(\xi) \ar@{.>}[ur]_(.6){\widetilde{T}} & }
$$
where $\widetilde{T}$ is a continuous linear operator and $i$ is the
inclusion map.
\end{itemize}
If (a)-(b) holds, then $M_{p,q}(T)=\Vert\widetilde{T}\Vert$.
\end{theorem}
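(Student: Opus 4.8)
The plan is to prove the two implications of the equivalence separately, carrying through a norm estimate in each construction, and then to read off $M_{p,q}(T)=\Vert\widetilde T\Vert$ by combining the two estimates. For $(b)\Rightarrow(a)$ I would observe that $p$-strong $q$-concavity is inherited by $T$ from the inclusion under left composition with a bounded operator. Given a factorization $T=\widetilde T\circ i$ with $\xi$ a probability measure satisfying \eqref{EQ: xiProperty}, Remark \ref{REM: i-pq-concave} (applicable since $X$ is $p$-convex and order semi-continuous) gives that $i\colon X\to S_{X_p}^{\,q}(\xi)$ is $p$-strongly $q$-concave with $M_{p,q}(i)\le\xi\big(B_{(X_p)'}^+\big)^{1/q}=1$. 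Then for every finite $(f_i)_{i=1}^n\subset X$,
\[
\Big(\sum_{i=1}^n\Vert T(f_i)\Vert_E^q\Big)^{1/q}=\Big(\sum_{i=1}^n\Vert\widetilde T(i(f_i))\Vert_E^q\Big)^{1/q}\le\Vert\widetilde T\Vert\,\Big(\sum_{i=1}^n\Vert i(f_i)\Vert_{S_{X_p}^{\,q}(\xi)}^q\Big)^{1/q},
\]
and bounding the last factor by the $p$-strongly $q$-concave inequality for $i$ shows that $T$ is $p$-strongly $q$-concave with $M_{p,q}(T)\le\Vert\widetilde T\Vert$.

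For $(a)\Rightarrow(b)$ I would apply Theorem \ref{THM: xiDomination} to obtain a probability Radon measure $\xi$ on $B_{(X_p)'}^+$ satisfying \eqref{EQ: xiProperty} for which \eqref{EQ: xiDomination} holds; since the right-hand side of \eqref{EQ: xiDomination} is exactly $M_{p,q}(T)\,\Vert f\Vert_{S_{X_p}^{\,q}(\xi)}$, the operator $T$ is bounded from $X$, endowed with the $S:=S_{X_p}^{\,q}(\xi)$-norm, into $E$ with norm $\le M_{p,q}(T)$. By Proposition \ref{PROP: SXpq(xi)-space}, $S$ is a saturated order continuous B.f.s.\ containing $X$ continuously. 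To extend $T$ to all of $S$ it suffices to check that $X$ is dense in $S$: given $f\in S$, splitting into positive and negative parts we may assume $f\ge0$, and Lemma \ref{LEM: saturatedBfs} (using saturation of $X$) provides $(f_n)_{n\ge1}\subset X$ with $0\le f_n\uparrow f$ $\mu$-a.e.; these $f_n$ belong to $X\subset S$ and $0\le f_n\uparrow f$ holds in the lattice $S$ as well, so order continuity of $S$ forces $\Vert f-f_n\Vert_S\to0$. Since $E$ is complete, $T$ then extends uniquely to a continuous linear $\widetilde T\colon S\to E$ whose norm equals that of $T$ on $(X,\Vert\cdot\Vert_S)$, hence $\Vert\widetilde T\Vert\le M_{p,q}(T)$, and $\widetilde T\circ i=T$ since $\widetilde T$ restricts to $T$ on $X$.

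Combining $\Vert\widetilde T\Vert\le M_{p,q}(T)$ from the construction in $(a)\Rightarrow(b)$ with $M_{p,q}(T)\le\Vert\widetilde T\Vert$ from $(b)\Rightarrow(a)$ yields $M_{p,q}(T)=\Vert\widetilde T\Vert$; moreover, density of $X$ in $S$ makes the extension $\widetilde T$ appearing in (b) unique, so this equality is unambiguous. The only non-formal step is the density of $X$ in $S_{X_p}^{\,q}(\xi)$, which I expect to be the crux of the argument: it rests precisely on the order continuity of $S_{X_p}^{\,q}(\xi)$ established in Proposition \ref{PROP: SXpq(xi)-space} together with the saturation of $X$ through Lemma \ref{LEM: saturatedBfs}. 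Everything else is a matter of unwinding the definitions of $p$-strong $q$-concavity and of the operator norm, together with the standard extension of a bounded linear operator from a dense subspace into a Banach space.
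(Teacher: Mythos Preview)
Your proposal is correct and follows essentially the same route as the paper. For $(a)\Rightarrow(b)$ the paper also invokes Theorem~\ref{THM: xiDomination} and then uses Lemma~\ref{LEM: saturatedBfs} together with the order continuity of $S_{X_p}^{\,q}(\xi)$ from Proposition~\ref{PROP: SXpq(xi)-space}; the only cosmetic difference is that the paper builds $\widetilde T$ explicitly as a limit on positive elements and extends by $f=f^+-f^-$, whereas you package this as ``$X$ is dense in $S_{X_p}^{\,q}(\xi)$, so extend from a dense subspace''---the content is identical. For $(b)\Rightarrow(a)$ the paper does the same computation you outline, writing it out directly rather than citing Remark~\ref{REM: i-pq-concave}, and both arrive at $M_{p,q}(T)\le\Vert\widetilde T\Vert$ via Lemma~\ref{LEM: Sup-lr-Xp*}.
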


\begin{proof}
(a) $\Rightarrow$ (b) From Theorem \ref{THM: xiDomination}, there is
a probability Radon measure $\xi$ on $B_{(X_p)'}^+$ satisfying
\eqref{EQ: xiProperty} such that $\Vert T(f)\Vert_E\le
M_{p,q}(T)\Vert f\Vert_{S_{X_p}^{\,q}(\xi)}$ for all $f\in X$. Given
$0\le f\in S_{X_p}^{\,q}(\xi)$, from Lemma \ref{LEM: saturatedBfs},
we can take $(f_n)_{n\ge1}\subset X$ such that $0\le f_n\uparrow f$
$\mu$-a.e. Then, since $S_{X_p}^{\,q}(\xi)$ is order continuous, we
have that $f_n\to f$ in $S_{X_p}^{\,q}(\xi)$ and so
$\big(T(f_n)\big)_{n\ge1}$ converges to some element $e$ of $E$.
Define $\widetilde{T}(f)=e$. Note that $\widetilde{T}$ is well
defined, since if $(g_n)_{n\ge1}\subset X$ is such that $0\le
g_n\uparrow f$ $\mu$-a.e., then
$$
\Vert T(f_n)-T(g_n)\Vert_E \le M_{p,q}(T)\Vert
f_n-g_n\Vert_{S_{X_p}^{\,q}(\xi)}\to0.
$$
Moreover,
\begin{eqnarray*}
\Vert \widetilde{T}(f)\Vert_E & = & \lim_{n\to\infty}\Vert
T(f_n)\Vert_E \\ & \le & M_{p,q}(T)\lim_{n\to\infty}\Vert
f_n\Vert_{S_{X_p}^{\,q}(\xi)} \\ & = & M_{p,q}(T)\Vert
f\Vert_{S_{X_p}^{\,q}(\xi)}.
\end{eqnarray*}
For a general $f\in S_{X_p}^{\,q}(\xi)$, writing $f=f^+-f^-$ where
$f^+$ and $f^-$ are the positive and negative parts of $f$
respectively, we define
$\widetilde{T}(f)=\widetilde{T}(f^+)-\widetilde{T}(f^-)$.  Then,
$\widetilde{T}\colon S_{X_p}^{\,q}(\xi)\to E$ is a continuous linear
operator extending $T$. Moreover $\Vert \widetilde{T}\Vert\le
M_{p,q}(T)$. Indeed, let $f\in S_{X_p}^{\,q}(\xi)$ and take
$(f_n^+)_{n\ge1},\,(f_n^-)_{n\ge1}\subset X$ such that $0\le
f_n^+\uparrow f^+$ and $0\le f_n^-\uparrow f^-$ $\mu$-a.e. Then,
$f_n^+-f_n^-\to f$ in $S_{X_p}^{\,q}(\xi)$ and
$$
T(f_n^+-f_n^-)=T(f_n^+)-T(f_n^-)\to
\widetilde{T}(f^+)-\widetilde{T}(f^-)=\widetilde{T}(f)
$$
in $E$. Hence,
\begin{eqnarray*}
\Vert \widetilde{T}(f)\Vert_E & = & \lim_{n\to\infty}\Vert
T(f_n^+-f_n^-)\Vert_E \\ & \le & M_{p,q}(T)\lim_{n\to\infty}\Vert
f_n^+-f_n^-\Vert_{S_{X_p}^{\,q}(\xi)} \\ & = & M_{p,q}(T)\Vert
f\Vert_{S_{X_p}^{\,q}(\xi)}.
\end{eqnarray*}

(b) $\Rightarrow$ (a) Given $(f_i)_{i=1}^n\subset X$, we have that
\begin{eqnarray*}
\sum_{i=1}^n\Vert T(f_i)\Vert_E^q & = & \sum_{i=1}^n\Vert
\widetilde{T}(f_i)\Vert_E^q \le
\Vert\widetilde{T}\Vert^q\sum_{i=1}^n\Vert
f_i\Vert_{S_{X_p}^{\,q}(\xi)}^q \\  & = & \Vert\widetilde{T}\Vert^q
\sum_{i=1}^n\int_{B_{(X_p)'}^+}\Big(\int_\Omega|f_i(\omega)|^ph(\omega)\,d\mu(\omega)\Big)^{q/p}d\xi(h)
\\ & \le & \Vert\widetilde{T}\Vert^q
\sup_{h\in
B_{(X_p)'}^+}\sum_{i=1}^n\Big(\int_\Omega|f_i(\omega)|^ph(\omega)\,d\mu(\omega)\Big)^{q/p}.
\end{eqnarray*}
That is, from Lemma \ref{LEM: Sup-lr-Xp*}, $T$ is $p$-strongly
$q$-concave with $M_{p,q}(T)\le\Vert\widetilde{T}\Vert$.
\end{proof}

A first  application of Theorem \ref{THM: SXpqExtension} is the
following Kakutani type representation theorem (see for instance
\cite[Theorem 1.b.2]{lindenstrauss-tzafriri}) for B.f.s.' being
order semi-continuous, $p$-convex and $p$-strongly $q$-concave.

\begin{corollary}\label{COR: i-isomorphism}
Suppose that $X$ is $p$-convex and order semi-continuous. The
following statements are equivalent:
\begin{itemize}\setlength{\leftskip}{-3ex}
\item[(a)] $X$ is $p$-strongly $q$-concave.

\item[(b)] There exists a probability Radon measure $\xi$ on
$B_{(X_p)'}^+$ satisfying \eqref{EQ: xiProperty}, such that
$X=S_{X_p}^{\,q}(\xi)$ with equivalent norms.
\end{itemize}
\end{corollary}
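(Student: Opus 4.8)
The plan is to deduce this from Theorem \ref{THM: SXpqExtension} applied to the identity operator $I\colon X\to X$.

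The implication (b) $\Rightarrow$ (a) is immediate: if $X=S_{X_p}^{\,q}(\xi)$ with equivalent norms, then by Remark \ref{REM: i-pq-concave} the inclusion $i\colon X\to S_{X_p}^{\,q}(\xi)$ is $p$-strongly $q$-concave, and since $i$ is now (up to equivalence of norms) the identity on $X$, this says precisely that $X$ is $p$-strongly $q$-concave. More carefully, for $(f_i)_{i=1}^n\subset X$ one bounds $\big(\sum_i\Vert f_i\Vert_X^q\big)^{1/q}$ by a constant times $\big(\sum_i\Vert f_i\Vert_{S_{X_p}^{\,q}(\xi)}^q\big)^{1/q}$ using the norm equivalence, and then Remark \ref{REM: i-pq-concave} together with Lemma \ref{LEM: Sup-lr-Xp*} gives the $p$-strongly $q$-concave inequality for $X$.

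For (a) $\Rightarrow$ (b): apply Theorem \ref{THM: SXpqExtension} with $T=I\colon X\to X$, which is $p$-strongly $q$-concave by hypothesis. This produces a probability Radon measure $\xi$ on $B_{(X_p)'}^+$ satisfying \eqref{EQ: xiProperty} and a continuous linear operator $\widetilde{I}\colon S_{X_p}^{\,q}(\xi)\to X$ with $\widetilde{I}\circ i=I$, where $i\colon X\to S_{X_p}^{\,q}(\xi)$ is the inclusion. From $\widetilde{I}\circ i=I$ we get $\Vert f\Vert_X=\Vert\widetilde{I}(i(f))\Vert_X\le\Vert\widetilde{I}\Vert\,\Vert f\Vert_{S_{X_p}^{\,q}(\xi)}$ for all $f\in X$; combined with the continuous inclusion $X\subset S_{X_p}^{\,q}(\xi)$ from Proposition \ref{PROP: SXpq(xi)-space} (which gives $\Vert f\Vert_{S_{X_p}^{\,q}(\xi)}\le\xi(B_{(X_p)'}^+)^{1/q}\Vert f\Vert_X=\Vert f\Vert_X$ since $\xi$ is a probability measure), this shows $X$ and $S_{X_p}^{\,q}(\xi)$ carry equivalent norms on $X$.

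It remains to check that $X$ and $S_{X_p}^{\,q}(\xi)$ coincide as sets of functions. The inclusion $X\subset S_{X_p}^{\,q}(\xi)$ is already known. For the reverse, let $f\in S_{X_p}^{\,q}(\xi)$ with $f\ge0$; by Lemma \ref{LEM: saturatedBfs} choose $(f_n)_{n\ge1}\subset X$ with $0\le f_n\uparrow f$ $\mu$-a.e. Since $S_{X_p}^{\,q}(\xi)$ is order continuous (Proposition \ref{PROP: SXpq(xi)-space}), $f_n\to f$ in $S_{X_p}^{\,q}(\xi)$, so $(f_n)$ is Cauchy there, hence Cauchy in $X$ by the norm equivalence just established; by completeness of $X$ it converges in $X$ to some $g\in X$, and convergence in $X$ forces $\mu$-a.e.\ convergence of a subsequence, so $g=f$ $\mu$-a.e.\ and thus $f\in X$. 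Splitting a general $f\in S_{X_p}^{\,q}(\xi)$ into positive and negative parts (both lie in $S_{X_p}^{\,q}(\xi)$ since it is a B.f.s.) finishes the argument. The only mild subtlety — the main point to be careful about — is the bookkeeping between the two directions of the norm comparison and making sure the order-continuity of $S_{X_p}^{\,q}(\xi)$ is genuinely used to upgrade the isomorphism of the dense subspace $X$ to an equality of spaces; everything else is a direct combination of Theorem \ref{THM: SXpqExtension}, Proposition \ref{PROP: SXpq(xi)-space} and Remark \ref{REM: i-pq-concave}.
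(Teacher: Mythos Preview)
Your proof is correct and follows essentially the same route as the paper: both directions rely on Theorem \ref{THM: SXpqExtension} applied to the identity, together with Proposition \ref{PROP: SXpq(xi)-space} and Remark \ref{REM: i-pq-concave}. The only cosmetic difference is that for the reverse inclusion $S_{X_p}^{\,q}(\xi)\subset X$ you pass through a Cauchy argument in $X$, whereas the paper uses the extension $\widetilde{I}$ directly, observing that $f_n=\widetilde{I}(f_n)\to\widetilde{I}(f)$ in $X$ and then identifying $\widetilde{I}(f)$ with $f$ via $\mu$-a.e.\ convergence of a subsequence; the two arguments are equivalent since your norm comparison $\Vert\cdot\Vert_X\le\Vert\widetilde{I}\Vert\,\Vert\cdot\Vert_{S_{X_p}^{\,q}(\xi)}$ on $X$ is exactly continuity of $\widetilde{I}$ restricted to $X$.
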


\begin{proof}
(a) $\Rightarrow$ (b) The identity map $I\colon X\to X$ is
$p$-strongly $q$-concave as $X$ is so. Then, from Theorem \ref{THM:
SXpqExtension}, there exists a probability Radon measure $\xi$ on
$B_{(X_p)'}^+$ satisfying \eqref{EQ: xiProperty}, such that $I$
factors as
$$
\xymatrix{
X \ar[rr]^I \ar@{.>}[dr]_(.4){i} & & X \\
& S_{X_p}^{\,q}(\xi) \ar@{.>}[ur]_(.6){\widetilde{I}} & }
$$
where $\widetilde{I}$ is a continuous linear operator with $\Vert
\widetilde{I}\Vert=M_{p,q}(X)$ and $i$ is the inclusion map. Since
$\xi$ is a probability measure, we have that $\Vert
f\Vert_{S_{X_p}^{\,q}(\xi)}\le\Vert f\Vert_X$ for all $f\in X$, see
the proof of Proposition \ref{PROP: SXpq(xi)-space}. Let $0\le f\in
S_{X_p}^{\,q}(\xi)$. By Lemma \ref{LEM: saturatedBfs}, we can take
$(f_n)_{n\ge1}\subset X$ such that $0\le f_n\uparrow f$ $\mu$-a.e.
Since $S_{X_p}^{\,q}(\xi)$ is order continuous, it follows that
$f_n\to f$ in $S_{X_p}^{\,q}(\xi)$ and so $f_n=\widetilde{I}(f_n)\to
\widetilde{I}(f)$ in $X$. Then, there is a subsequence of
$(f_n)_{n\ge1}$ converging $\mu$-a.e.\ to $\widetilde{I}(f)$ and
hence $f=\widetilde{I}(f)\in X$. For a general $f\in
S_{X_p}^{\,q}(\xi)$, writing $f=f^+-f^-$ where $f^+$ and $f^-$ are
the positive and negative parts of $f$ respectively, we have that
$f=\widetilde{I}(f^+)-\widetilde{I}(f^-)=\widetilde{I}(f)\in X$.
Therefore, $X=S_{X_p}^{\,q}(\xi)$ and $\widetilde{I}$ is de identity
map. Moreover, $\Vert f\Vert_X=\Vert \widetilde{I}(f)\Vert_X\le
\Vert \widetilde{I}\Vert\,\Vert
f\Vert_{S_{X_p}^{\,q}(\xi)}=M_{p,q}(X)\Vert
f\Vert_{S_{X_p}^{\,q}(\xi)}$ for all $f\in X$.

(b) $\Rightarrow$ (a) From Remark \ref{REM: i-pq-concave} it follows
that the identity map $I\colon X \to X$ is $p$-strongly $q$-concave.
\end{proof}

Note that under conditions of Corollary \ref{COR: i-isomorphism}, if
$X$ is $p$-strongly $q$-concave with constant $M_{p,q}(X)=1$, then
$X=S_{X_p}^{\,q}(\xi)$ with equal norms.


\section{$q$-summing operators on a $p$-convex B.f.s.}

Recall that a linear operator $T\colon X\to E$ between Banach spaces
is said to be \emph{$q$-summing} ($1\le q<\infty$) if there exists a
constant $C>0$ such that
$$
\Big(\sum_{i=1}^n\Vert Tx_i\Vert_E^q\Big)^{1/q}\le C\sup_{x^*\in
B_{X^*}}\Big(\sum_{i=1}^n|\langle x^*,x_i\rangle|^q\Big)^{1/q}
$$
for every finite subset $(x_i)_{i=1}^n\subset X$. Denote by
$\pi_q(T)$ the smallest possible value of $C$. Information about
$q$-summing operators can be found in \cite{diestel-jarchow-tonge}.

One of the main relations between summability and concavity for
operators defined on a B.f.s.\ $X$, is that every $q$-summing
operator is $q$-concave. This is a consequence of a direct
calculation which shows that for every $(f_i)_{i=1}^n\subset X$ and
$x^*\in X^*$ it follows that
\begin{equation}\label{EQ: q-norm}
\Big(\sum_{i=1}^n|\langle x^*,f_i\rangle|^q\Big)^{1/q}\le\Vert
x^*\Vert_{X^*}\Big\Vert\Big(\sum_{i=1}^n|f_i|^q\Big)^{1/q}\Big\Vert_X,
\end{equation}
see for instance \cite[Proposition 1.d.9]{lindenstrauss-tzafriri}
and the comments below. However, this calculation can be slightly
improved to obtain the following result.

\begin{proposition}\label{PROP: q-Summing}
Let $1\le p\le q<\infty$. Every $q$-summing linear operator $T\colon
X \to E$ from a B.f.s.\ $X$ into a Banach space $E$, is $p$-strongly
$q$-concave with $M_{p,q}(T)\le\pi_q(T)$.
\end{proposition}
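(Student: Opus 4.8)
The plan is to refine the elementary computation behind inequality \eqref{EQ: q-norm} so that the $\ell^r$-weights occurring in the definition of a $p$-strongly $q$-concave operator appear on their own, and then to feed the resulting estimate into the defining inequality of a $q$-summing operator. Observe that, unlike in Theorem \ref{THM: SXpqExtension}, here $X$ is \emph{not} assumed to be $p$-convex or order semi-continuous, so one cannot pass through $(X_p)'$ and Lemma \ref{LEM: Sup-lr-Xp*}; instead the whole argument is carried out directly with $X^*$ and the lattice structure of $X$.

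First I would fix a finite subset $(f_i)_{i=1}^n\subset X$ and $x^*\in B_{X^*}$, and use the finite-dimensional $\ell^q$--$\ell^{q'}$ duality to write $\big(\sum_{i=1}^n|\langle x^*,f_i\rangle|^q\big)^{1/q}=\sup\sum_{i=1}^n\gamma_i\langle x^*,f_i\rangle$, the supremum being taken over real scalars with $\sum_{i=1}^n|\gamma_i|^{q'}\le1$, where $q'$ is the conjugate exponent of $q$. For such scalars, $\sum_i\gamma_i\langle x^*,f_i\rangle=\langle x^*,\sum_i\gamma_if_i\rangle\le\big\Vert\sum_i|\gamma_i|\,|f_i|\big\Vert_X$, using $\Vert x^*\Vert_{X^*}\le1$ together with the ideal property of $\Vert\cdot\Vert_X$. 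The step where the argument departs from the classical derivation of \eqref{EQ: q-norm} is the factorization of the scalar sequence: since $\frac1r=\frac1p-\frac1q$ one has $\frac1{q'}=\frac1{p'}+\frac1r$, with $p'$ the conjugate exponent of $p$, so setting $\delta_i=|\gamma_i|^{q'/p'}$ and $\beta_i=\gamma_i|\gamma_i|^{-q'/p'}$ (with the convention $0/0=0$, and the evident reading when $p=1$, where $p'=\infty$ and $\delta_i\equiv1$, or when $p=q$, where $r=\infty$ and $|\beta_i|\in\{0,1\}$) yields $\gamma_i=\delta_i\beta_i$ with $\sum_i|\delta_i|^{p'}=\sum_i|\gamma_i|^{q'}\le1$ and $\sum_i|\beta_i|^{r}=\sum_i|\gamma_i|^{q'}\le1$.

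Then I would apply H\"older's inequality pointwise in $\Omega$ with exponents $p',p$ to the product $|\gamma_i|\,|f_i(\omega)|=|\delta_i|\,\big(|\beta_i|\,|f_i(\omega)|\big)$, obtaining $\sum_i|\gamma_i|\,|f_i(\omega)|\le\big(\sum_i|\delta_i|^{p'}\big)^{1/p'}\big(\sum_i|\beta_if_i(\omega)|^p\big)^{1/p}\le\big(\sum_i|\beta_if_i(\omega)|^p\big)^{1/p}$. Taking $X$-norms, then the supremum over admissible $(\gamma_i)$, and then the supremum over $x^*\in B_{X^*}$, gives
$$
\sup_{x^*\in B_{X^*}}\Big(\sum_{i=1}^n|\langle x^*,f_i\rangle|^q\Big)^{1/q}\le\sup_{(\beta_i)\in B_{\ell^r}}\Big\Vert\Big(\sum_{i=1}^n|\beta_if_i|^p\Big)^{1/p}\Big\Vert_X.
$$
Composing this with the $q$-summing inequality for $T$ yields $\big(\sum_{i=1}^n\Vert T(f_i)\Vert_E^q\big)^{1/q}\le\pi_q(T)\sup_{(\beta_i)\in B_{\ell^r}}\big\Vert(\sum_{i=1}^n|\beta_if_i|^p)^{1/p}\big\Vert_X$, which is exactly the statement that $T$ is $p$-strongly $q$-concave with $M_{p,q}(T)\le\pi_q(T)$.

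The only genuinely delicate point is the scalar factorization and the bookkeeping of conjugate exponents: one must verify $\frac1{q'}=\frac1{p'}+\frac1r$ and check that the definitions of $(\delta_i)$ and $(\beta_i)$ remain valid in the limiting cases $p=1$ and $p=q$. Everything else is the routine manipulation already used to obtain \eqref{EQ: q-norm}, now combined with two applications of H\"older's inequality—one among the scalars, one pointwise on $\Omega$—and the ideal/lattice property of $\Vert\cdot\Vert_X$.
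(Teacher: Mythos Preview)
Your argument is correct. Both you and the paper reduce the claim to the inequality
$$
\sup_{x^*\in B_{X^*}}\Big(\sum_{i=1}^n|\langle x^*,f_i\rangle|^q\Big)^{1/q}\le\sup_{(\beta_i)\in B_{\ell^r}}\Big\Vert\Big(\sum_{i=1}^n|\beta_if_i|^p\Big)^{1/p}\Big\Vert_X,
$$
and then invoke the $q$-summing inequality. The paper's derivation of this estimate is a bit more direct: rather than dualizing $\ell^q$ against $\ell^{q'}$ and then splitting each $\gamma_i$ as $\delta_i\beta_i$ via $\frac1{q'}=\frac1{p'}+\frac1r$, it uses the duality $(\ell^{q/p})^*=\ell^{r/p}$ in one stroke to obtain
$$
\Big(\sum_{i=1}^n|\langle x^*,f_i\rangle|^q\Big)^{1/q}=\sup_{(\beta_i)\in B_{\ell^r}}\Big(\sum_{i=1}^n|\langle x^*,\beta_if_i\rangle|^p\Big)^{1/p},
$$
and then applies \eqref{EQ: q-norm} with exponent $p$ to the family $(\beta_if_i)_{i=1}^n$. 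Your explicit factorization $\gamma_i=\delta_i\beta_i$ followed by pointwise H\"older is precisely this step unpacked by hand; it costs a little more bookkeeping (the edge cases $p=1$ and $p=q$ that you flag) but makes the role of the ideal property of $\Vert\cdot\Vert_X$ completely transparent and avoids quoting \eqref{EQ: q-norm} a second time for a different exponent.
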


\begin{proof}
Let $1<r\le\infty$ be such that
$\frac{1}{r}=\frac{1}{p}-\frac{1}{q}$ and consider a finite subset
$(f_i)_{i=1}^n\subset X$. We only have to prove
$$
\sup_{x^*\in B_{X^*}}\Big(\sum_{i=1}^n|\langle
x^*,f_i\rangle|^q\Big)^{1/q}\le\sup_{(\beta_i)_{i\ge1}\in
B_{\ell^r}}\Big\Vert\Big(\sum_{i=1}^n|\beta_if_i|^p\Big)^{1/p}\Big\Vert_X.
$$

Fix $x^*\in B_{X^*}$. Noting that $\frac{q}{p}$ and $\frac{r}{p}$
are conjugate exponents and using the inequality \eqref{EQ: q-norm},
we have
\begin{eqnarray*}
\Big(\sum_{i=1}^n|\langle x^*,f_i\rangle|^q\Big)^{1/q} & = &
\sup_{(\alpha_i)_{i\ge1}\in
B_{\ell^{r/p}}}\Big(\sum_{i=1}^n|\alpha_i||\langle
x^*,f_i\rangle|^p\Big)^{1/p} \\ & = & \sup_{(\beta_i)_{i\ge1}\in
B_{\ell^r}}\Big(\sum_{i=1}^n|\langle
x^*,\beta_if_i\rangle|^p\Big)^{1/p} \\ & \le &
\sup_{(\beta_i)_{i\ge1}\in B_{\ell^r}}\Big\Vert\Big(\sum_{i=1}^n
|\beta_if_i|^p\Big)^{1/p}\Big\Vert_X.
\end{eqnarray*}
Taking supremum in $x^*\in B_{X^*}$ we get the conclusion.
\end{proof}

From Proposition \ref{PROP: q-Summing}, Theorem \ref{THM:
SXpqExtension} and Remark \ref{REM: i-pq-concave}, we obtain the
final result.

\begin{corollary}\label{COR: q-summing-extension}
Set $1\le p\le q<\infty$. Let $X$ be a saturated order
semi-continuous $p$-convex B.f.s.\ and consider a $q$-summing linear
operator $T\colon X \to E$ with values in a Banach space $E$. Then,
there exists a probability Radon measure $\xi$ on $B_{(X_p)'}^+$
satisfying \eqref{EQ: xiProperty} such that $T$ can be factored as
$$
\xymatrix{
X \ar[rr]^T \ar@{.>}[dr]_(.4){i} & & E \\
& S_{X_p}^{\,q}(\xi) \ar@{.>}[ur]_(.6){\widetilde{T}} & }
$$
where $\widetilde{T}$ is a continuous linear operator with
$\Vert\widetilde{T}\Vert\le\pi_q(T)$ and $i$ is the inclusion map
which turns out to be $p$-strongly $q$-concave, and so $q$-concave.
\end{corollary}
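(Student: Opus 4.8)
The plan is to deduce the statement by chaining together three facts already established above, so that the argument is essentially bookkeeping about hypotheses and constants. As in Section 3, I would first pass to an equivalent norm on $X$ for which the $p$-convexity constant is $M^p(X)=1$; this is what makes $X_p$ and $(X_p)'$ honest B.f.s.' and the space $S_{X_p}^{\,q}(\xi)$ meaningful. Since $X$ is assumed saturated (and order semi-continuous), so are $X_p$ and $(X_p)'$, hence the compact set $B_{(X_p)'}^+$ carrying the candidate measures is at our disposal.

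The core of the argument is then: apply Proposition \ref{PROP: q-Summing} to see that the $q$-summing operator $T\colon X\to E$ is $p$-strongly $q$-concave with $M_{p,q}(T)\le\pi_q(T)$; feed this into Theorem \ref{THM: SXpqExtension}, whose remaining hypotheses ($X$ $p$-convex and order semi-continuous) are part of our assumptions, to obtain via the implication (a)$\Rightarrow$(b) a probability Radon measure $\xi$ on $B_{(X_p)'}^+$ satisfying \eqref{EQ: xiProperty} together with a continuous linear operator $\widetilde{T}\colon S_{X_p}^{\,q}(\xi)\to E$ with $T=\widetilde{T}\circ i$ ($i$ the inclusion) and $\Vert\widetilde{T}\Vert=M_{p,q}(T)$. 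Combining the two constant estimates yields $\Vert\widetilde{T}\Vert=M_{p,q}(T)\le\pi_q(T)$, the bound claimed.

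For the last assertion I would invoke Remark \ref{REM: i-pq-concave}: with $X$ $p$-convex and order semi-continuous and $\xi$ satisfying \eqref{EQ: xiProperty}, the inclusion $i\colon X\to S_{X_p}^{\,q}(\xi)$ is $p$-strongly $q$-concave (indeed $M_{p,q}(i)\le\xi(B_{(X_p)'}^+)^{1/q}=1$), and therefore $q$-concave, as observed when that notion was introduced. This completes the proof.

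There is no genuine obstacle remaining here: the analytic content lives entirely in Theorem \ref{THM: xiDomination} (the Hahn-Banach separation producing $\xi$) and in the order-continuity bootstrapping of Theorem \ref{THM: SXpqExtension} that builds $\widetilde{T}$. The one point deserving a word of care is that the reduction to $M^p(X)=1$ changes the norm of $X$ and hence, a priori, the numerical value of $\pi_q(T)$; one should therefore either take $M^p(X)=1$ as a standing convention (as Section 3 does) or read the inequality $\Vert\widetilde{T}\Vert\le\pi_q(T)$ with $\pi_q$ computed in the chosen equivalent norm.
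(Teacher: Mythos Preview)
Your proposal is correct and follows exactly the same route as the paper, which simply derives the corollary by combining Proposition \ref{PROP: q-Summing}, Theorem \ref{THM: SXpqExtension}, and Remark \ref{REM: i-pq-concave}. Your closing caveat about the renorming to $M^p(X)=1$ and its effect on $\pi_q(T)$ is a worthwhile clarification that the paper leaves implicit in its standing convention from Section~3.
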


Observe that what we obtain in Corollary \ref{COR:
q-summing-extension} is a proper extension for $T$, and not just a
factorization as the obtained in the Pietsch theorem for $q$-summing
operators through a subspace of an $L^q$-space.


\end{document}